\documentclass[12pt]{article}
\usepackage{amsmath,amssymb,amsthm}

%My definitions
\newcommand{\R}{{\mathbb R}}
\newcommand{\Z}{{\mathbb Z}}
\newcommand{\N}{{\mathbb N}}
\newcommand{\C}{{\mathbb C}}
\newcommand{\Q}{{\mathbb Q}}
\newcommand{\D}{{\mathcal D}}
\newcommand{\T}{{\mathbb T}}
\newcommand{\B}{{\mathcal B}}

\newcommand{\meas}{\mathop{\rm meas}}

\newcommand{\const}{{\rm const}}
\newcommand{\esslim}{\mathop{\rm ess\,lim}}

\newcommand{\sign}{\mathop{\rm sign}}
\newcommand{\supp}{\mathop{\rm supp}}
\renewcommand{\div}{{\rm div}}

\def\Xint#1{\mathchoice
{\XXint\displaystyle\textstyle{#1}}%
{\XXint\textstyle\scriptstyle{#1}}%
{\XXint\scriptstyle\scriptscriptstyle{#1}}%
{\XXint\scriptscriptstyle\scriptscriptstyle{#1}}%
\!\int}
\def\XXint#1#2#3{{\setbox0=\hbox{$#1{#2#3}{\int}$ }
\vcenter{\hbox{$#2#3$ }}\kern-.57\wd0}}

\def\dashint{\Xint-}
%-------------------------------------------------
\numberwithin{equation}{section}
%-------------------------------------------------

\theoremstyle{plain}
\newtheorem{theorem}{Theorem}[section]
\newtheorem{lemma}{Lemma}[section]
\newtheorem{proposition}{Proposition}[section]
\newtheorem{corollary}{Corollary}[section]

\theoremstyle{definition}
\newtheorem{definition}{Definition}[section]
\newtheorem{remark}{Remark}[section]
\newtheorem{example}{Example}[section]

\title{On the Cauchy problem for scalar conservation laws in the class of Besicovitch almost periodic functions: global well-posedness and decay property}
\author{E.Yu.~Panov\footnote{Novgorod State University, e-mail: \textbf{Eugeny.Panov@novsu.ru}}}

\sloppy
\textwidth=145mm \textheight=220mm
\begin{document}
\maketitle
\begin{abstract} We study the Cauchy problem for a multidimensional scalar conservation law with merely continuous flux vector in the class of Besicovitch almost periodic functions. The existence and uniqueness of entropy solutions are established. We propose also the necessary and sufficient condition for the decay of almost periodic entropy solutions as time $t\to+\infty$.
\end{abstract}

\section{Introduction}\label{sec1}
In the half-space $\Pi=\R_+\times\R^n$, $\R_+=(0,+\infty)$, we consider the Cauchy problem for a first order multidimensional
conservation law
\begin{equation}\label{1}
u_t+\div_x\varphi(u)=0
\end{equation}
with initial data
\begin{equation}\label{ini}
u(0,x)=u_0(x).
\end{equation}
The flux vector $\varphi(u)$ is supposed to be only continuous: $$\varphi(u)=(\varphi_1(u),\ldots,\varphi_n(u))\in C(\R,\R^n).$$
Assume that $u_0(x)\in L^\infty(\R^n)$. Then the notion of entropy solution of (\ref{1}), (\ref{ini}) in the sense of S.N.~Kruzhkov \cite{Kr} is well-defined.

\begin{definition}\label{def1}
A bounded measurable function $u=u(t,x)\in L^\infty(\Pi)$ is called an entropy solution (e.s. for
short) of (\ref{1}), (\ref{ini}) if for all $k\in\R$
\begin{equation}\label{2}
|u-k|_t+\div_x[\sign(u-k)(\varphi(u)-\varphi(k))]\le 0
\end{equation}
in the sense of distributions on $\Pi$ (in $\D'(\Pi)$);
$$\esslim_{t\to 0} u(t,\cdot)=u_0 \ \mbox{ in } L^1_{loc}(\R^n).$$
\end{definition}

Condition (\ref{2}) means that for all non-negative test functions $f=f(t,x)\in C_0^1(\Pi)$
$$
\int_\Pi [|u-k|f_t+\sign(u-k)(\varphi(u)-\varphi(k))\cdot\nabla_xf]dtdx\ge 0
$$
(here $\cdot$ denotes the inner product in $\R^n$).

It is known that e.s. is always exists (see \cite{KrPa1,PaMV,PaIzv}~) but, in the case under consideration when the flux
functions are merely continuous, this e.s. may be nonunique (see examples in \cite{KrPa1,KrPa2}). Nevertheless, if initial function is periodic (at least in $n-1$ independent directions), the uniqueness holds: an e.s. of (\ref{1}), (\ref{ini}) is unique and space-periodic, see the proof in \cite{PaMax1,PaMax2,PaIzv}.

It is also well-known (see for instance \cite[Proposition~1]{PaIzv}) that initial requirement can be included in the single integral entropy inequality:
for all nonnegative test functions $f=f(t,x)\in C_0^1(\bar\Pi)$, where $\bar\Pi=[0,+\infty)\times\R^n$, for all $k\in\R$
\begin{equation}\label{enint}
\int_{\R^n}|u_0(x)-k|f(0,x)dx+\int_\Pi [|u-k|f_t+\sign(u-k)(\varphi(u)-\varphi(k))\cdot\nabla_xf]dtdx\ge 0,
\end{equation}
that is, $u=u(t,x)$ is an e.s. of (\ref{1}), (\ref{ini}) if and only if $u$ satisfies relation (\ref{enint}).

We will essentially rely on the following mean $L^1$-contraction property. Denote by $C_R$ the cube
$$\{ \ x=(x_1,\ldots,x_n)\in\R^n \ | \ |x|_\infty=\max_{i=1,\ldots,n}|x_i|\le R/2 \ \}, \quad R>0$$ and let
$$N_p(u)=\limsup_{R\to +\infty}\left(R^{-n}\int_{C_R} |u(x)|^pdx\right)^{1/p}, \quad p\ge 1,
$$
be the mean $L^p$-norm of a function $u(x)\in L^\infty(\R^n)$.

\begin{proposition}[mean $L^1$-contraction property] \label{pro1}
Let $u(t,x),v(t,x)\in L^\infty(\Pi)$ be e.s. of (\ref{1}), (\ref{ini}) with initial functions $u_0(x), v_0(x)$, respectively. Then for almost every (a.e.) $t>0$
\begin{equation}\label{m-contr}
N_1(u(t,\cdot)-v(t,\cdot))\le N_1(u_0-v_0).
\end{equation}
\end{proposition}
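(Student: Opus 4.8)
The plan is to prove the mean $L^1$-contraction property by combining the classical Kruzhkov doubling-of-variables technique with a careful localization and averaging over large cubes. Recall that the standard Kruzhkov argument, applied to two entropy solutions $u$ and $v$, yields the Kato-type inequality
\begin{equation}\label{kato}
\int_\Pi \left[ |u-v| f_t + \sign(u-v)(\varphi(u)-\varphi(v))\cdot\nabla_x f \right]\,dtdx \ge 0
\end{equation}
for all nonnegative $f\in C_0^1(\Pi)$, and more precisely the version incorporating the initial data,
\begin{equation}\label{katoini}
\int_{\R^n}|u_0(x)-v_0(x)|f(0,x)\,dx + \int_\Pi \left[ |u-v| f_t + \sign(u-v)(\varphi(u)-\varphi(v))\cdot\nabla_x f \right]\,dtdx \ge 0
\end{equation}
for nonnegative $f\in C_0^1(\bar\Pi)$. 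This doubling-of-variables step is where the merely continuous flux matters, but since the statement of Definition~\ref{def1} and the equivalent formulation \eqref{enint} are already in hand, I would treat \eqref{katoini} as the output of the (by now standard) symmetric doubling argument and concentrate the real work on the passage to the mean norm.

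First I would fix $t_0>0$ and a small $h>0$, and choose the test function to factor as $f(t,x)=\alpha(t)\beta_R(x)$, where $\alpha(t)$ is a Lipschitz approximation of the characteristic function of $[0,t_0]$ (so that $\alpha\equiv 1$ on $[0,t_0-h]$, drops linearly to $0$ on $[t_0-h,t_0]$, hence $\alpha_t \approx -\frac{1}{h}\mathbf{1}_{[t_0-h,t_0]}$), and $\beta_R(x)$ is a smooth cutoff that equals $1$ on the cube $C_R$, is supported in a slightly larger cube $C_{R+L}$, and satisfies $|\nabla_x\beta_R|\le C/L$ for a fixed transition width $L$. Substituting into \eqref{katoini} and letting $h\to 0$ (using the $L^1_{loc}$ time-continuity of the entropy solution, or more carefully the Lebesgue differentiation theorem so that the conclusion holds for a.e.\ $t_0$), the time-derivative term produces $-\int |u(t_0,x)-v(t_0,x)|\beta_R(x)\,dx$ against the initial term $\int|u_0-v_0|\beta_R\,dx$. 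Rearranging gives
\begin{equation}\label{flux-est}
\int_{\R^n} |u(t_0,x)-v(t_0,x)|\beta_R(x)\,dx \le \int_{\R^n}|u_0-v_0|\beta_R(x)\,dx + \int_0^{t_0}\!\!\int_{\R^n} \sign(u-v)(\varphi(u)-\varphi(v))\cdot\nabla_x\beta_R\,dx\,dt .
\end{equation}

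Next I would estimate the flux (boundary) term on the right of \eqref{flux-est}. Since $u,v$ are bounded by some $M$ and $\varphi$ is continuous, the integrand $|\sign(u-v)(\varphi(u)-\varphi(v))|$ is bounded by a constant $C(M)$, and $\nabla_x\beta_R$ is supported in the shell $C_{R+L}\setminus C_R$ of volume $O(L R^{n-1})$ with $|\nabla_x\beta_R|\le C/L$. Hence this term is bounded by $t_0\cdot C(M)\cdot \frac{C}{L}\cdot O(LR^{n-1}) = O(R^{n-1})$, uniformly in the choice of $L$. Multiplying \eqref{flux-est} through by $R^{-n}$, using $\beta_R\ge \mathbf{1}_{C_R}$ on the left and $\beta_R\le \mathbf{1}_{C_{R+L}}$ on the right, and taking $\limsup_{R\to+\infty}$, the flux term contributes $O(R^{n-1})/R^n\to 0$, the left side is bounded below by $N_1(u(t_0,\cdot)-v(t_0,\cdot))$, and the right side converges to $N_1(u_0-v_0)$ (the fixed shift by the bounded width $L$ being absorbed in the $\limsup$, since $R^{-n}\int_{C_{R+L}}=R^{-n}\int_{C_R}+o(1)$). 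This delivers \eqref{m-contr}.

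The main obstacle I anticipate is not the doubling of variables itself—that is assumed from the cited formulation—but rather making the localized energy estimate fully rigorous at the level of the mean norm: one must verify that the boundary flux term genuinely contributes at order $R^{n-1}$ and not $R^n$, which is exactly the surface-versus-volume gain that makes the mean contraction possible, and one must justify that the passage $h\to 0$ yields a statement valid for a.e.\ $t_0>0$ rather than merely in an averaged sense. A secondary subtlety is that, because $\varphi$ is only continuous (not Lipschitz) and the solution need not be unique, I must be careful that \eqref{katoini} is available for the specific pair $(u,v)$ of entropy solutions in hand; I would lean on the fact that the symmetric Kruzhkov inequality for continuous flux is already established in the cited references, so that \eqref{katoini} holds, and the remaining argument is the purely measure-theoretic localization sketched above.
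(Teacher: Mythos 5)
Your proposal follows essentially the same route as the paper: Kruzhkov doubling of variables to get the Kato inequality for the pair $(u,v)$ (the paper's relation (\ref{3})), a product test function (approximate time-characteristic function times a spatial cutoff), the surface-versus-volume estimate killing the flux term, and a $\limsup$ in $R$. There are two cosmetic differences. First, the paper uses the self-similarly scaled cutoff $g(x/R)$, whose transition shell has width proportional to $R$ and gradient $O(1/R)$; this forces the factor $k^n$ in (\ref{6}) and the final limit $k\to 1$, whereas your fixed-width shell gives $\mathbf{1}_{C_R}\le\beta_R\le\mathbf{1}_{C_{R+L}}$ and avoids that last step — a slightly cleaner bookkeeping, with the same $O(R^{n-1})$ flux contribution. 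Second, the paper does not assume the Kato inequality with the initial term built in: it first derives the inequality between two positive times $t_0<t_1$ taken from a good set, and only then lets $t_0\to 0$ using the essential-limit initial condition. Your version, which postulates the initial-data form of the Kato inequality outright, is legitimate but hides exactly that extra limiting step; it does not follow verbatim from (\ref{enint}), which concerns a single solution against constants.

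The one point where your sketch is genuinely incomplete is the issue you flag at the end but do not resolve: the passage $h\to 0$ yields your localized inequality only for $t_0$ outside a null set \emph{that depends on $R$}, while your final step fixes $t_0$ and takes $\limsup_{R\to+\infty}$, which requires the inequality at that $t_0$ for all (or at least a sequence of) $R$ simultaneously. An uncountable intersection of full-measure sets need not have full measure, so as written the argument does not close. The paper handles precisely this with the technical Lemma~\ref{lem1} of the Appendix: the set $F$ of times $t$ such that $(t,x)$ is a Lebesgue point of $|u(t,x)-v(t,x)|$ for a.e.\ $x$ has full measure, and every $t\in F$ is a Lebesgue point of $I_R(t)$ for \emph{all} $R>0$ and all cutoffs at once — a single exceptional set. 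In your setup the repair is elementary: run the argument only for integer radii $R=N$, intersect the countably many full-measure sets of admissible $t_0$, and note that for bounded integrands the sandwich $N\le R<N+1$ gives
$$
\limsup_{R\to+\infty}R^{-n}\int_{C_R}w(x)\,dx=\limsup_{\N\ni N\to\infty}N^{-n}\int_{C_N}w(x)\,dx ,
$$
so restricting to integer radii loses nothing on either side of the inequality. With that one repair your argument is complete.
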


\begin{proof}
Applying Kruzhkov doubling of variables method, we obtain the relation (see \cite{Kr,PaMV,PaIzv})
\begin{equation}\label{3}
|u-v|_t+\div_x[\sign(u-v)(\varphi(u)-\varphi(v))]\le 0 \ \mbox{ in } \D'(\Pi).
\end{equation}
Let $\rho(s)\in C_0^\infty(\R)$ be a function such that $\supp\rho(s)\subset [0,1]$, $\rho(s)\ge 0$,
${\displaystyle\int_{-\infty}^{+\infty}\rho(s)ds=1}$. We set for $\nu\in\N$
$$
\delta_\nu(s)=\nu\rho(\nu s), \quad\theta_\nu(t)=\int_0^t\delta_\nu(s)ds=\int_0^{\nu t}\rho(s)ds.
$$
Obviously, the sequence $\delta_\nu(s)$ converges as $\nu\to\infty$ to the Dirac $\delta$-measure weakly in $\D'(\R)$ while the sequence $\theta_\nu(t)$ pointwise converges to the Heaviside function. If $t_1>t_0>0$ then the function $\chi_\nu(t)=\theta_\nu(t-t_0)-\theta_\nu(t-t_1)\in C_0^\infty(\R^+)$, $0\le\chi_\nu(t)\le 1$, and the sequence $\chi_\nu(t)$ pointwise converges as ${\nu\to\infty}$ to the indicator function $\chi(t)$ of the interval $(t_0,t_1]$.
We choose the function $g(y)\in C_0^\infty(\R^n)$ with the properties: $0\le g(y)\le 1$, $g(y)\equiv 1$ in the cube $C_1$, $g(y)\equiv 0$ in the complement of the cube $C_k$, $k>1$ (~so that $\supp g(x)\subset C_k$~). Applying relation (\ref{3}) to the test function $f=R^{-n}\chi_\nu(t)g(x/R)$, where $R>0$, we arrive at
\begin{eqnarray}\label{3i}
\int_0^\infty\left(R^{-n}\int_{\R^n} |u(t,x)-v(t,x)| g(x/R)dx\right)(\delta_\nu(t-t_0)-\delta_\nu(t-t_1))dt+ \nonumber\\ R^{-n-1}\int_\Pi \sign(u-v)(\varphi(u)-\varphi(v))\cdot\nabla_y g(x/R)\chi_\nu(t)dtdx\ge 0.
\end{eqnarray}
Define the set
$$
F=\{ \ t>0 \ | \ (t,x) \ \mbox{ is a Lebesgue point of } |u(t,x)-v(t,x)| \mbox{ for a.e. } x\in\R^n \ \}.
$$
By the technical Lemma~\ref{lem1}, placed in Appendix, $F\subset\R_+$ is a set of full Lebesgue measure and each $t\in F$ is a Lebesgue point of the functions
$$I_R(t)=R^{-n}\int_{\R^n} |u(t,x)-v(t,x)| g(x/R)dx$$ for all $R>0$ and all $g(y)\in C_0(\R)$. Now we suppose that
$t_0,t_1\in F$ and pass in (\ref{3i}) to the limit as ${\nu\to\infty}$. As a result, we obtain the inequality
$$
I_R(t_1)\le I_R(t_0)+R^{-n-1}\int_\Pi \sign(u-v)(\varphi(u)-\varphi(v))\cdot\nabla_y g(x/R)\chi(t)dtdx.
$$
From this inequality and the initial conditions it follows in the limit as $F\ni t_0\to 0$ that for all $t=t_1\in F$
\begin{equation}\label{4}
I_R(t)\le I_R(0)+R^{-n-1}\int_{(0,t)\times\R^n} \sign(u-v)(\varphi(u)-\varphi(v))\cdot\nabla_y g(x/R)dtdx,
\end{equation}
where $\displaystyle I_R(0)=R^{-n}\int_{\R^n} |u_0(x)-v_0(x)| g(x/R)dx$.
Making the change $y=x/R$ in the last integral in (\ref{4}), we obtain the estimate
\begin{eqnarray}\label{5}
R^{-n-1}\left|\int_{(0,t)\times\R^n}\sign(u-v)(\varphi(u)-\varphi(v))\cdot\nabla_y g(x/R)dtdx\right|\le\nonumber\\ R^{-1}\|\varphi(u)-\varphi(v)\|_\infty\int_{(0,t)\times\R^n}|\nabla_y g|(y)dtdy\mathop{\to}_{R\to+\infty} 0.
\end{eqnarray}
Here we denote by $|v|$ the Euclidean norm of a finite-dimensional vector $v$. Further, by the properties of $g(y)$, for all $t\ge 0$
\begin{eqnarray*}
R^{-n}\int_{C_R}|u(t,x)-v(t,x)|dx\le I_R(t)\le \\ R^{-n}\int_{C_{kR}}|u(t,x)-v(t,x)|dx=k^n (kR)^{-n}\int_{C_{kR}}|u(t,x)-v(t,x)|dx,
\end{eqnarray*}
which implies that
\begin{equation}\label{6}
N_1(u(t,\cdot)-v(t,\cdot))\le \limsup_{R\to+\infty} I_R(t)\le k^n N_1(u(t,\cdot)-v(t,\cdot)).
\end{equation}
With the help of relations (\ref{5}), (\ref{6}), we derive from (\ref{4}) in the limit as $R\to+\infty$ that
$N_1(u(t,\cdot)-v(t,\cdot))\le k^n N_1(u_0-v_0)$ for all $t\in F$. To complete the proof, it only remains to notice that $k>1$ is arbitrary.
\end{proof}
\begin{remark}\label{rem1}
As was established in \cite[Corollary~7.1]{Pa6},
after possible correction on a set of null Lebesgue measure, any e.s. $u(t,x)$ is continuous on $[0,+\infty)$ as a map $t\to u(t,\cdot)\in L^1_{loc}(\R^n)$. Hence, without loss of generality, we may suppose that every e.s. satisfies the property $u(t,\cdot)\in C([0,+\infty),L^1_{loc}(\R^n))$. Then the statement of Proposition~\ref{pro1} holds for all $t>0$. The continuity assumption also allows to replace the essential limit in the initial requirement of Definition~\ref{def1} by the usual limit.
\end{remark}

Denote by $L_0^\infty(\R^n)$ the kernel of the seminorm $N_1$, that is, $L_0^\infty(\R^n)$ consists of bounded measurable functions $u=u(x)$ such that $N_1(u)=0$.
From Proposition~\ref{pro1} it readily follows the weak uniqueness property.

\begin{corollary}\label{cor1} If $u(t,x)$, $v(t,x)$ are e.s. of problem (\ref{1}), (\ref{ini}) with initial data $u_0(x), v_0(x)$, respectively, and $v_0-u_0\in L^\infty_0(\R^n)$,
then $v(t,\cdot)-u(t,\cdot)\in L^\infty_0(\R^n)$ for a.e. $t>0$.
\end{corollary}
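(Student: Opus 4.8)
The plan is to read off the conclusion directly from Proposition~\ref{pro1}, since $L_0^\infty(\R^n)$ was defined precisely as the kernel of the seminorm $N_1$. First I would unwind the hypothesis: the assumption $v_0-u_0\in L_0^\infty(\R^n)$ means, by the very definition of $L_0^\infty$, that $N_1(v_0-u_0)=0$. Because $|v_0-u_0|=|u_0-v_0|$ pointwise, we have equally $N_1(u_0-v_0)=0$, so the right-hand side of the contraction estimate already vanishes.

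Next I would invoke the mean $L^1$-contraction property. Applying Proposition~\ref{pro1} to the pair $u,v$ gives, for a.e. $t>0$, the inequality $N_1(u(t,\cdot)-v(t,\cdot))\le N_1(u_0-v_0)=0$. Since $N_1$ is defined as a $\limsup$ of nonnegative averaged $L^1$-norms, it is itself nonnegative, whence $N_1(u(t,\cdot)-v(t,\cdot))=0$ for a.e. $t>0$. Using once more the symmetry $N_1(u(t,\cdot)-v(t,\cdot))=N_1(v(t,\cdot)-u(t,\cdot))$ together with the definition of $L_0^\infty(\R^n)$ as the kernel of $N_1$, this is exactly the assertion $v(t,\cdot)-u(t,\cdot)\in L_0^\infty(\R^n)$ for a.e. $t>0$.

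There is no serious obstacle here: the corollary is an immediate consequence of the definition of the seminorm kernel and the contraction estimate already in hand. The only point warranting a moment's care is the bookkeeping of the exceptional null set of times; but since Proposition~\ref{pro1} furnishes its conclusion for a.e. $t>0$, the set of $t$ on which $N_1(u(t,\cdot)-v(t,\cdot))=0$ is the same full-measure set, and nothing further is required. If one prefers the cleaner formulation valid for \emph{every} $t>0$, one simply appeals to Remark~\ref{rem1}, under which $u$ and $v$ may be taken continuous as maps into $L^1_{loc}(\R^n)$ and the contraction estimate then holds for all $t>0$.
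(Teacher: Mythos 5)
Your proof is correct and is exactly the argument the paper intends: the corollary is stated as following ``readily'' from Proposition~\ref{pro1}, and your application of the contraction estimate $N_1(u(t,\cdot)-v(t,\cdot))\le N_1(u_0-v_0)=0$ together with the definition of $L_0^\infty(\R^n)$ as the kernel of $N_1$ is precisely that deduction. Nothing further is needed.
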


We may consider problem (\ref{1}), (\ref{ini}) as the Cauchy problem in the quotient space $L^\infty(\R^n)/L^\infty_0(\R^n)$. In view of Corollary~\ref{cor1}, an e.s. of this problem is unique.

The aim of the present paper is investigation of the well-posedness of problem (\ref{1}), (\ref{ini}) in the class of Besicovitch almost periodic functions.
Recall, (~see \cite{Bes,Lev}~) that Besicovitch space is the closure of trigonometric polynomials, i.e., finite sums $\sum a_\lambda e^{2\pi i\lambda\cdot x}$, with ${i^2=-1}$, $\lambda\in\R^n$, in the quotient space $\B^p(\R^n)=B^p(\R^n)/B^p_0(\R^n)$, where
$$
B^p(\R^n)=\{ u\in L^p_{loc}(\R^n) \ | \ N_p(u)<+\infty \}, \ B^p_0(\R^n)=\{ u\in L^p_{loc}(\R^n) \ | \ N_p(u)=0 \}.
$$
The space $\B^p(\R^n)$ is equipped with the norm $\|u\|_p=N_p(u)$ (we identify classes in the quotient space and their representatives).
The space $\B^p(\R^n)$ is a Banach space, it is isomorphic to the completeness of the space $AP(\R^n)$ of Bohr almost periodic functions with respect to the norm $N_p$.

It is known \cite{Bes} that for each $u\in \B^p(\R^n)$ there exist the mean value
$$\dashint_{\R^n} u(x)dx\doteq\lim\limits_{R\to+\infty}R^{-n}\int_{C_R} u(x)dx$$ and the Bohr-Fourier coefficients
$$
a_\lambda=\dashint_{\R^n} u(x)e^{-2\pi i\lambda\cdot x}dx, \quad\lambda\in\R^n.
$$
The set
$$ Sp(u)=\{ \ \lambda\in\R^n \ | \ a_\lambda\not=0 \ \} $$ is called the spectrum of an almost periodic function $u$.
It is known \cite{Bes} that the spectrum $Sp(u)$ is at most countable. Denote by $M(u)$ the smallest additive subgroup of $\R^n$ containing $Sp(u)$ (notice that $M(u)$ is countable whenever it is different from the zero subgroup).

Our first result is the following

\begin{theorem}\label{th1} Let $u_0(x)\in\B^1(\R^n)\cap L^\infty(\R^n)$ be a bounded Besicovitch almost periodic function, and $u(t,x)$ be an e.s. of problem (\ref{1}), (\ref{ini}).
Then, after possible correction on a set of null measure, ${u(t,\cdot)\in C([0,+\infty),\B^1(\R^n))\cap L^\infty(\Pi)}$ and for all $t>0$  $M(u(t,\cdot))\subset M(u_0)$.
\end{theorem}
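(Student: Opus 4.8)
The statement has two parts: the regularity $u(t,\cdot)\in C([0,+\infty),\B^1(\R^n))$ (the inclusion in $L^\infty(\Pi)$ is already built into the definition of e.s.), and the spectral inclusion $M(u(t,\cdot))\subset M(u_0)$. I would treat the spectral inclusion first, since the continuity will come out of the same construction together with Proposition~\ref{pro1}.

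\emph{First reduction.} For a countable subgroup $G\subset\R^n$ set $A_G=\{w\in\B^1(\R^n) : Sp(w)\subset G\}$; since $|a_\lambda(w)|\le N_1(w)$, each $a_\lambda$ is $N_1$-continuous and $A_G=\bigcap_{\lambda\notin G}\ker a_\lambda$ is a closed subspace of $\B^1(\R^n)$. By Bochner--Fej\'er approximation there are trigonometric polynomials $P_m\to u_0$ in $\B^1(\R^n)$ with $Sp(P_m)\subset M(u_0)$, hence $M(P_m)\subset M(u_0)$. Let $u^m$ be e.s.\ with data $P_m$. By Proposition~\ref{pro1}, $N_1(u^m(t,\cdot)-u(t,\cdot))\le N_1(P_m-u_0)\to0$ uniformly in $t$, so $u^m(t,\cdot)\to u(t,\cdot)$ in $\B^1(\R^n)$. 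Thus it suffices to prove, for each fixed $m$, that $u^m(t,\cdot)\in A_{M(P_m)}\subset A_{M(u_0)}$ and that $t\mapsto u^m(t,\cdot)$ is $\B^1$-continuous: the inclusions pass to the limit because $A_{M(u_0)}$ is closed, and the continuity passes because the convergence is uniform in $t$. This reduces everything to a trigonometric-polynomial datum, i.e.\ to a finitely generated group $G=M(u_0)$ with a $\Q$-independent $\Z$-basis $\mu_1,\dots,\mu_d$.

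\emph{Torus lifting.} Define the linear map $\Phi:\R^n\to\T^d=\R^d/\Z^d$ by $\Phi(x)=(\mu_1\cdot x,\dots,\mu_d\cdot x)$. Every character $e^{2\pi i\lambda\cdot x}$ with $\lambda=\sum_k m_k\mu_k\in G$ equals $e^{2\pi i m\cdot\Phi(x)}$, so $u_0=W_0\circ\Phi$ for a smooth $W_0$ on $\T^d$. Since $\partial_{x_i}$ transfers under $\Phi$ to $\sum_k(\mu_k)_i\partial_{\theta_k}$, the operator $\div_x\varphi(\cdot)$ transfers into $\div_\theta\Psi(\cdot)$ with continuous flux $\Psi=(\mu_1\cdot\varphi,\dots,\mu_d\cdot\varphi)$. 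I would solve the periodic Cauchy problem $W_t+\div_\theta\Psi(W)=0$, $W(0,\cdot)=W_0$, on $\T^d$, obtaining by the cited periodic theory a unique periodic e.s.\ $W\in C([0,+\infty),L^1(\T^d))\cap L^\infty$, and then set, formally, $u(t,x)=W(t,\Phi(x))$. Because $W(t,\cdot)$ is $\Z^d$-periodic, each of its Fourier frequencies, pulled back by $\Phi$, lands in $G$, which yields the spectral inclusion; and $\B^1$-continuity of $t\mapsto W(t,\Phi(\cdot))$ follows from $L^1(\T^d)$-continuity of $W$ via the isometry $F\mapsto F\circ\Phi$ furnished by Weyl equidistribution, $\dashint_{\R^n}F(\Phi(x))dx=\int_{\T^d}F\,d\theta$ (valid since the $\mu_k$ are $\Q$-independent, so the orbit is uniquely ergodic). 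Finally the weak uniqueness of Corollary~\ref{cor1} would identify $W(t,\Phi(\cdot))$ with the given $u$.

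\emph{Main obstacle.} The hard point is giving honest meaning to the composition $W\circ\Phi$: when $d>n$, or the frequencies are incommensurate, $\Phi(\R^n)$ is a dense but Lebesgue-null winding subset of $\T^d$, so for $t>0$, where $W(t,\cdot)$ is only $L^\infty\cap BV$, the pointwise value $W(t,\Phi(x))$ is undefined and the torus entropy inequalities cannot be restricted to $\Phi(\R^n)$ by naive pullback. I would handle this by a phase-shift device: for $\omega\in\T^d$ put $u^\omega(t,x)=W(t,\Phi(x)+\omega)$. By Fubini this is a well-defined element of $L^\infty(\Pi)$ for a.e.\ $\omega$, and for such $\omega$ the Kruzhkov entropy inequalities \emph{do} pull back (the exceptional fibres are avoided), so $u^\omega$ is a genuine e.s.\ on $\Pi$ with datum $W_0(\Phi(\cdot)+\omega)$ and with $Sp(u^\omega(t,\cdot))\subset G$. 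Since $\omega\mapsto W_0(\Phi(\cdot)+\omega)$ is $\B^1$-continuous with value $u_0$ at $\omega=0$, combining this with the uniform stability of Proposition~\ref{pro1}, the closedness of $A_G$, and the weak uniqueness of Corollary~\ref{cor1} lets me pass from a.e.\ $\omega$ to $\omega=0$ and conclude that the given $u$ coincides in $\B^1$ with the isometric image of $W(t,\cdot)$. Hence $M(u(t,\cdot))\subset G=M(u_0)$ and $u\in C([0,+\infty),\B^1(\R^n))$, as required.
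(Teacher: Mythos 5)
Your proposal is correct and follows essentially the same route as the paper: reduction to trigonometric-polynomial data via Bochner--Fej\'er approximation and the uniform stability of Proposition~\ref{pro1}, lifting to a periodic problem on a torus through the linear map determined by a $\Q$-independent $\Z$-basis of $M(u_0)$, the phase-shift device to obtain genuine entropy solutions for a.e.\ shift (this is exactly the paper's Theorem~\ref{th3}), and passage to the zero shift by the mean $L^1$-contraction. The paper fills in the two steps you sketch --- the a.e.-shift pullback (via a slicing argument with the technical Lemma~\ref{lem1}) and the transfer of $L^1(\T^m)$-norms to mean $L^1$-norms (via ergodicity of the shift action, Birkhoff's theorem, and Fej\'er approximation of the periodic solution) --- but your outline matches its proof.
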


The next our result concerns the decay property of e.s. as the time ${t\to+\infty}$.

\begin{theorem}\label{th2}
Assume that
\begin{eqnarray}\label{ND}
\forall\xi\in M_0\doteq M(u_0), \xi\not=0 \ \mbox{ the functions } u\to\xi\cdot\varphi(u) \nonumber\\
\mbox{ are not affine on non-empty intervals }
\end{eqnarray}
(the linear non-degeneracy condition). Then
\begin{equation}\label{dec}
\lim_{t\to +\infty} \dashint_{\R^n}|u(t,x)-C|dx=0, \quad \mbox{ where } \quad C=\dashint_{\R^n} u_0(x)dx.
\end{equation}
Moreover, condition (\ref{ND}) is exact: if it fails, then there exists an initial function $u_0\in\B^1(\R^n)\cap L^\infty(\R^n)$ such that $Sp(u_0)\subset M_0$ and the e.s. of (\ref{1}), (\ref{ini}) does not satisfy the decay property (\ref{dec}).
\end{theorem}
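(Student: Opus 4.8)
The plan is to treat the two assertions of Theorem~\ref{th2} separately. For the decay (\ref{dec}) I would first observe that the constant $C$ is itself a stationary e.s.\ of (\ref{1}), and that $C$ is the right target because the zeroth Bohr--Fourier coefficient is conserved (the mean of a divergence vanishes), so $\dashint_{\R^n} u(t,x)\,dx\equiv C$. Applying Proposition~\ref{pro1} to the pair $u(t'+\cdot,\cdot)$ (again an e.s., with data $u(t',\cdot)$) and the constant solution $C$, and using the continuity from Remark~\ref{rem1}, shows that $\phi(t):=N_1(u(t,\cdot)-C)=\dashint_{\R^n}|u(t,x)-C|\,dx$ is nonincreasing. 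Hence $\ell:=\lim_{t\to+\infty}\phi(t)$ exists, and the whole problem reduces to proving $\ell=0$.

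The engine for $\ell=0$ is a strong precompactness property fed by (\ref{ND}). By Theorem~\ref{th1} the trajectory stays in the fixed closed subspace $\B^1_{M_0}\subset\B^1$ of Besicovitch functions with spectrum in the countable group $M_0$; it is convenient to identify $\B^1_{M_0}$ with $L^1(\mathbb G)$, where $\mathbb G$ is the compact abelian group dual to $M_0$ taken discrete (the Bohr compactification), the mean $\dashint$ becoming the normalized Haar integral. Under this identification mean-$L^1$ convergence becomes genuine $L^1(\mathbb G)$ convergence, so ``local'' and ``global'' agree and compactness methods apply. Concretely I would rescale, setting $u_\lambda(t,x)=u(\lambda t,\lambda x)$ (again an e.s.\ by scale invariance of (\ref{1}), with $N_1$ scale invariant so that $N_1(u_\lambda(t,\cdot)-C)=\phi(\lambda t)$); the data $u_\lambda(0,\cdot)=u_0(\lambda\cdot)$ converges in the local weak-$*$ sense to its mean $C$, and the linear non-degeneracy (\ref{ND}) — invoked via velocity averaging / $H$-measure localization in the admissible oscillation directions $\xi\in M_0$ — upgrades this to strong $L^1_{loc}$ convergence, identifying the limit as the unique e.s.\ with constant data $C$.

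With precompactness of $\{u(t,\cdot):t\ge0\}$ in $\B^1_{M_0}$ available, I would argue by contradiction: if $\ell>0$, choose $t_k\to+\infty$ with $u(t_k,\cdot)\to w$ in $\B^1_{M_0}$; the limit generates an entire e.s.\ $\tilde u$ with $N_1(\tilde u(t,\cdot)-C)\equiv\ell$ (since the monotone $\phi$ is constant along the limit) and $\dashint\tilde u=C$. Condition (\ref{ND}) rules out nonconstant entire solutions confined to $\B^1_{M_0}$ with time-independent distance to their mean, forcing $\tilde u\equiv C$ and hence $\ell=0$. I expect the genuinely hard part to be exactly this strong precompactness: one must use non-degeneracy only along the discrete directions $\xi\in M_0$ (the spectrum cannot escape $M_0$, by Theorem~\ref{th1}) and must pass from local strong convergence to convergence in the global mean seminorm $N_1$, which is precisely where the lift to the compact group $\mathbb G$ does the work.

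For the exactness of (\ref{ND}), suppose it fails: there are $\xi\in M_0$, $\xi\neq0$, constants $\alpha,\beta$ and an interval $(a,b)$ with $\xi\cdot\varphi(u)=\alpha u+\beta$ on $(a,b)$. Take a nonconstant smooth $1$-periodic $w$ with range in $(a,b)$ and set $u_0(x)=w(\xi\cdot x)$; then $Sp(u_0)\subset\{k\xi:k\in\Z\}\subset M_0$ because $M_0$ is a group. The plane wave $u(t,x)=w(\xi\cdot x-\alpha t)$ keeps values in $(a,b)$, and a direct check shows it satisfies (\ref{2}) with equality for every $k$: since $u$ depends on $(t,x)$ only through $s=\xi\cdot x-\alpha t$, each entropy flux divergence equals $\frac{d}{ds}[\sign(w-k)(\xi\cdot\varphi(w)-\xi\cdot\varphi(k))]$, which on the affine range is $\alpha\frac{d}{ds}|w-k|$ and cancels $\partial_t|w-k|=-\alpha\frac{d}{ds}|w-k|$; this uses only affineness of $\xi\cdot\varphi$, not differentiability of $\varphi$. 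Thus $u$ is the (unique, by Corollary~\ref{cor1}) e.s., and by translation invariance of $N_1$ we get $\dashint_{\R^n}|u(t,x)-C|\,dx\equiv\dashint_{\R^n}|w(\xi\cdot x)-C|\,dx>0$ for all $t$, so (\ref{dec}) fails.
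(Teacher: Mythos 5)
Your second half (exactness of (\ref{ND})) is correct and is essentially the paper's own argument: when $\xi\cdot\varphi$ is affine on $[a,b]$, the travelling wave $u(t,x)=w(\xi\cdot x-\alpha t)$ with values in that interval satisfies the entropy relations with equality (the paper takes $w$ to be a sine), and its $N_1$-distance to its mean is a positive constant in time. Your reduction of the first half to proving $\ell=0$, via monotonicity of $\phi(t)=N_1(u(t,\cdot)-C)$ from Proposition~\ref{pro1} applied to $u$ and the constant solution $C$, is also sound.

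However, your proof of the decay itself has a genuine gap: the two steps that carry all the weight --- (i) strong precompactness of the trajectory in $\B^1_{M_0}\cong L^1(\mathbb{G})$ obtained by ``velocity averaging / H-measure localization'' on the compact group $\mathbb{G}$, and (ii) the rigidity claim that (\ref{ND}) excludes nonconstant entire solutions with time-independent distance to their mean --- are asserted, not proved, and neither is routine. Averaging lemmas and the H-measure localization principle are theorems about $\R^N$ or finite-dimensional tori; when $M_0$ has infinite rank, $\mathbb{G}$ is an infinite-dimensional compact group (a solenoid), on which the conservation law itself must first be re-founded (differentiation makes sense only along the dense subgroup $\R^n$) and for which no off-the-shelf averaging lemma exists. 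Working directly on $\R^n$ instead does not help: there the localization principle involves all directions on the sphere, whereas (\ref{ND}) gives non-degeneracy only along $\xi\in M_0$, and making ``non-degeneracy only along admissible directions'' rigorous is precisely the unproved passage to $\mathbb{G}$. So you have reduced the theorem to a harder open step; moreover step (ii) is essentially equivalent to the decay property you are trying to prove. The paper circumvents all of this with a two-stage argument: for trigonometric-polynomial data, $M_0$ is free abelian of finite rank $m$, so the problem lifts (Theorem~\ref{th3}) to a genuinely periodic problem on $\R^m$; ergodicity of the action $z\mapsto z+y(x)$ on $\T^m$ converts mean values over $\R^n$ into integrals over $\T^m$, and the known periodic decay theorem of \cite{PaAIHP} applies (this is Theorem~\ref{th5}). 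The general case then follows by approximating $u_0$ with Bochner-Fej\'er polynomials $u_{0l}$, $Sp(u_{0l})\subset M_0$, and using the mean $L^1$-contraction of Proposition~\ref{pro1} to get $N_1(u_l(t,\cdot)-u(t,\cdot))\le N_1(u_{0l}-u_0)\to 0$ uniformly in $t$, which transfers the decay from $u_l$ to $u$. If you wish to salvage your route, you would have to prove an averaging/localization lemma on $\mathbb{G}$ from scratch; within the tools available in this paper, the finite-spectrum reduction plus contraction is the intended path.
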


Notice that for periodic initial data $u_0(x)$ with the lattice of periods $L\subset\R^n$ the group $M_0$ coincides with the dual lattice
$L'=\{ \ \xi\in\R^n \ | \ \xi\cdot x\in\Z \ \forall x\in L \ \}$, and the result of Theorem~\ref{th2} reduces to the decay property for periodic e.s. from \cite{PaAIHP}:
the linear non-degeneracy condition
\begin{eqnarray}\label{ND2}
\forall\xi\in L', \xi\not=0 \ \mbox{ the function } u\to\xi\cdot\varphi(u) \nonumber\\ \mbox{ is not affine on non-empty intervals }
\end{eqnarray}
is necessary and sufficient for the decay of every space-periodic (with the lattice of periods $L$) e.s. $u(t,x)$:
\begin{equation}\label{dec1}
\lim_{t\to +\infty} \int_{\T^n}|u(t,x)-C|dx=0, \quad C=\int_{\T^n} u_0(x)dx.
\end{equation}
Here $\T^n=\R^n/L$ is a torus (~which can be identified with the fundamental parallelepiped $$P=\{ \ x=\sum_{i=1}^n \alpha_ie_i \ | \ \alpha_i\in [0,1), \ i=1,\ldots,n \ \},$$
where $e_i$, $i=1,\ldots,n$ is a basis in $L$~), $dx$ is the normalized Lebesgue measure on $\T^n$.
Notice that for a periodic function $w(x)\in L^1(\T^n)$
$$
\dashint_{\R^n} w(x)dx=\frac{1}{|P|}\int_P w(x)dx=\int_{\T^n} w(x)dx,
$$
where $|P|$ denote the Lebesgue measure of $P$.
If the lattice of periods is not fixed and may depend on a solution, the decay property holds under the stronger assumption
\begin{eqnarray}\label{ND3}
\forall \xi\in\R^n, \xi\not=0, \mbox{ the function } u\to\xi\cdot\varphi(u) \nonumber\\
\mbox{ is not affine on non-empty intervals. }
\end{eqnarray}
This result generalizes the decay property established by G.-Q.~Chen and H.~Frid \cite{ChF}
under the conditions $\varphi(u)\in
C^2(\R,\R^n)$ and
\begin{equation}\label{ND1}
\forall (\tau,\xi)\in\R^{n+1}, (\tau,\xi)\not=0, \quad \meas\,\{ \ u\in\R \ | \
\tau+\varphi'(u)\cdot\xi=0 \ \}=0
\end{equation}
(by $\meas A$ we denote the Lebesgue measure of a measurable set $A$).
Obviously, condition (\ref{ND3}) is strictly weaker than (\ref{ND1}) even in the case of smooth flux $\varphi(u)$.
For completeness sake we confirm it by the following simple example.

\begin{example}
Let $n=1$, $A\subset [0,1]$ be a closed nowhere dense set of positive Lebesgue measure (a so-called fat Cantor set).
There exists a smooth function $\varphi(u)\in C^\infty(\R)$ such that $A=\varphi^{-1}(0)$ and $\varphi(u)$ is not affine on non-empty intervals. For instance, we may define
$\varphi(u)=e^{-\frac{1}{(u-a)(b-u)}}$ on each component interval $(a,b)$ of the complement $\R\setminus A$ (in the cases when $a=-\infty$ or when $b=+\infty$ we set
$\varphi(u)=e^{-\frac{1}{(b-u)}}$, $\varphi(u)=e^{-\frac{1}{(u-a)}}$, respectively), and set $\varphi(u)=0$ for $u\in A$. Obviously, the function $\varphi(u)$ satisfies condition (\ref{ND3}) (notice that $A$ does not contain any interval) but, since ${A\subset\{ \ u\in\R \ | \
\varphi'(u)=0 \ \}}$, condition (\ref{ND1}) fails.
\end{example}
\begin{remark}\label{rem2}
In the case when the flux vector $\varphi(u)$ satisfies the Lipschitz condition on any segment in $\R$, an e.s. $u(t,x)$ of (\ref{1}), (\ref{ini}) exhibits the property of finite speed of propagation, which implies in particular that for all $p,q\in\R^n$ and $t>0$
\begin{equation}\label{St1}
\int_{C_R}|u(t,x+p)-u(t,x+q)|dx\le\int_{C_{R+Lt}}|u_0(x+p)-u_0(x+q)|dx,
\end{equation}
where $R>0$, and $L$ is the Lipschitz constant of $\varphi(u)$ on the segment $[-M,M]$, $M=\|u\|_\infty$.

Suppose that the initial data $u_0(x)$ is a Stepanov almost periodic function (see the definition in \cite{Bes,Lev}).
It readily follows from (\ref{St1}) that $u(t,\cdot)$ is a Stepanov almost periodic function as well for all $t>0$.
The decay property (\ref{dec}) for such solutions was established in \cite{Frid} under non-degeneracy condition (\ref{ND1}) (with smooth flux) and rather restrictive assumptions on the dependence of the length of inclusion intervals for $\varepsilon$-almost periods of $u_0$ on the parameter $\varepsilon$.

In the present paper we remove all these unnecessary assumptions and prove the decay property under exact condition (\ref{ND}) for general equation (\ref{1}) with merely continuous flux and with arbitrary Besicovitch almost periodic initial data (~notice that Stepanov almost periodic functions are strictly embedded in $\B^1(\R^n)$~).
\end{remark}

\section{Initial data with finite spectrum. Reduction to the periodic case}\label{sec2}

In this section we study the case when $\displaystyle u_0(x)=\sum_{\lambda\in\Lambda}a_\lambda e^{2\pi i\lambda\cdot x}$ is a trigonometric polynomial. Here $\Lambda=Sp(u_0)\subset\R^n$ is a finite set.  We assume that $u_0(x)$ is a real function, this means that $-\Lambda=\Lambda$ and $a_{-\lambda}=\overline{a_\lambda}$ (as usual, $\bar z$ denotes the complex conjugate to $z\in\C$). The minimal additive subgroup $M_0\doteq M(u_0)$ containing $\Lambda$ is a finite generated torsion free abelian group and therefore it is a free abelian group of finite rank (see \cite{Lang}). Thus, there exists a basis $\lambda_j\in M_0$, $j=1,\ldots,m$. Every element $\lambda\in M_0$ can be uniquely represented as $\displaystyle\lambda=\lambda(\bar k)=\sum_{j=1}^m k_j\lambda_j$, $\bar k=(k_1,\ldots,k_m)\in\Z^m$. In particular, vectors $\lambda_j$, $j=1,\ldots,m$, are linearly independent over the field of rational numbers $\Q$. We introduce the finite set $J=\{ \ \bar k\in\Z^m \ | \ \lambda(\bar k)\in\Lambda \ \}$. Then we can represent the initial function as follows
$$u_0(x)=\sum_{\bar k\in J} a_{\bar k}e^{2\pi i\sum_{j=1}^m k_j\lambda_j\cdot x}, \quad a_{\bar k}\doteq a_{\lambda(\bar k)}.$$
By this representation we find that $u_0(x)=v_0(y(x))$, where
$$
v_0(y)=\sum_{\bar k\in J} a_{\bar k}e^{2\pi i\bar k\cdot y}
$$
is a periodic function in $\R^m$ with the standard lattice of periods $\Z^m$, and $y(x)$ is a linear map from $\R^n$ into $\R^m$ defined as $\displaystyle y_j=\lambda_j\cdot x=\sum_{k=1}^n\lambda_{jk}x_k$, $\lambda_{jk}$, $k=1,\ldots,n$, being coordinates of the vector $\lambda_j$, $j=1,\ldots,m$. We consider the conservation law
\begin{equation}\label{1r}
v_t+\div_y \tilde \varphi(v)=0, \quad v=v(t,y), \ t>0, \ y\in\R^m,
\end{equation}
with the flux functions $$\tilde\varphi_j(v)=\sum_{k=1}^n\lambda_{jk}\varphi_k(v)\in C(\R), \quad j=1,\ldots,m.$$
By results \cite{PaMax1,PaMax2,PaIzv} there exists a unique e.s. $v(t,y)\in L^\infty(\R_+\times\R^m)$ of the Cauchy problem for (\ref{1r}) with initial data $v_0(y)$, and this e.s. is $y$-periodic: $v(t,y+e)=v(t,y)$ a.e. in $\R_+\times\R^m$ for all $e\in\Z^m$. Moreover, in view of \cite[Corollary~7.1]{Pa6}, we may assume that $v(t,\cdot)\in C([0,+\infty), L^1(\T^m))$, where $\T^m=\R^m/\Z^m$ is an $m$-dimensional torus (~it may be identified with the fundamental cube $[0,1)^m$ in $\R^m$~).

\begin{theorem}\label{th3}
For a.e. $z\in\R^m$ the function $u(t,x)=v(t,z+y(x))$ is an e.s. of (\ref{1}), (\ref{ini}) with initial function
$v_0(z+y(x))$.
\end{theorem}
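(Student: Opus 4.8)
The plan is to verify directly that $u(t,x)=v(t,z+y(x))$ satisfies the integral entropy inequality (\ref{enint}) on $\Pi$, which already encodes the initial condition, by feeding suitable test functions into the entropy inequality (\ref{enint}) for $v$ on $\R_+\times\R^m$. Write the linear map $A\colon\R^n\to\R^m$, $Ax=(\lambda_1\cdot x,\ldots,\lambda_m\cdot x)$, so that $y(x)=Ax$ and its adjoint $A^*\colon\R^m\to\R^n$ satisfies $A^*e_j=\lambda_j$ for the standard basis $e_j$ of $\R^m$. The algebraic heart of the matter is the identity $\tilde\varphi_j(v)-\tilde\varphi_j(c)=\sum_l\lambda_{jl}(\varphi_l(v)-\varphi_l(c))$, which says that the Kruzhkov entropy flux of the reduced problem (\ref{1r}) is $A$ applied to that of the original problem; dually, for any $\psi(t,y)$ one has $A^*\nabla_y\psi=\nabla_x(\psi\circ A)$. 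Thus, if $A$ were a bijection, the substitution $y=Ax$ would turn (\ref{enint}) for $v$ into (\ref{enint}) for $u$ verbatim.

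The difficulty is that $A$ is in general neither injective nor surjective: the $\lambda_j$ are only $\Q$-linearly independent, so $m$ may exceed the rank $r=\dim A(\R^n)$, and also $r<n$ is possible. I would therefore split $\R^n=U\oplus W$ and $\R^m=V\oplus V^\perp$ orthogonally, where $W=\ker A$, $U=W^\perp$, $V=A(\R^n)$ and $V^\perp=\ker A^*$; then $A|_U\colon U\to V$ is an isomorphism with $(A|_U)^*=A^*|_V$. Two features make these decompositions the right ones. First, $u(t,x)=v(t,z+Ax)$ is constant along $W$, and hence so is its entropy flux; therefore, integrating any test function $f\in C_0^1(\bar\Pi)$ over $w\in W$ annihilates the $W$-directional derivatives (the flux factor is $w$-independent and the integral over $W$ of a $w$-derivative of the compactly supported $f$ vanishes), leaving only $U$-derivatives. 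This reduces the $u$-functional to a functional on $\R_+\times U$ built from $F(t,\xi)=\int_W f(t,\xi+w)\,dw$. Second, $A^*$ annihilates $V^\perp$, which is exactly what is needed in the next step.

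For the test function on $\R^m$ I would take $\psi_\varepsilon(t,y)=\Phi(t,P_Vy)\,\chi_\varepsilon(P_{V^\perp}y-z)$, where $\Phi(t,\eta)=F(t,(A|_U)^{-1}\eta)$ is the transported profile, $P_V,P_{V^\perp}$ are the orthogonal projections, and $\chi_\varepsilon\ge0$ is an approximate identity on $V^\perp$ concentrating at the origin. Plugging $\psi_\varepsilon$ into (\ref{enint}) for $v$ and using $A^*\nabla_y\psi_\varepsilon=(A^*|_V\nabla_\eta\Phi)\,\chi_\varepsilon$ — note that the term $\Phi\,\nabla_\zeta\chi_\varepsilon$ drops out because $A^*$ kills $V^\perp$ — all surviving factors carry the weight $\chi_\varepsilon(\zeta-z)$. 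Letting $\varepsilon\to0$ and invoking the Lebesgue differentiation theorem in the variable $\zeta\in V^\perp$ collapses the $\zeta$-integral to its value at $\zeta=z$, for a.e. $z$. After the change $\eta=A\xi$ (a constant Jacobian) one checks, using $A^*|_V\nabla_\eta\Phi=\nabla_U F$, that the resulting inequality is precisely the reduced $u$-functional built from $F$, which by the first step equals, up to a positive constant, the full functional (\ref{enint}) for $u$. Hence $u$ is an e.s. for a.e. $z$.

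The step I expect to be the main obstacle, and the exact source of the ``a.e. $z$'', is the limit $\varepsilon\to0$ in the directions $V^\perp$: it must be justified simultaneously for all admissible pairs $(c,f)$. I would handle this by restricting to a countable set of constants $c$ and a countable $C_0^1$-dense family of test functions $f$ (the entropy inequalities for these extend to all $(c,f)$ by continuity), applying the Lebesgue differentiation theorem to the corresponding countable family of $L^1_{loc}(V^\perp)$-valued functions of $z$, and intersecting the resulting full-measure sets. The same slicing — Fubini applied to the null set on which two representatives of $v$ disagree — shows that $v(t,z+y(x))$ is well defined, bounded and measurable for a.e. $z$, which must be established before any of the above makes sense.
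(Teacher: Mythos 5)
Your argument is sound in its essentials, but it takes a genuinely different route from the paper's. The paper sidesteps all rank and kernel considerations by a dimension-extension trick: $v$ is regarded as an e.s. of (\ref{1r}) in the enlarged half-space $t>0$, $(y,x)\in\R^{m+n}$ (constant in the attached dummy variables $x$), and the linear change of variables $(z,x)\mapsto(z+y(x),x)$ --- invertible precisely because the dummy variables are present --- shows that $u(t,z,x)=v(t,z+y(x))$ satisfies the Kruzhkov inequalities for (\ref{1}) in $\D'(\R_+\times\R^{m+n})$ with $z$ as a passive parameter. The slicing at $z=z_0$ is then done by testing with $f(t,x)g_\nu(z-z_0)$; the fact that a single full-measure set of $z_0$ works simultaneously for all $k$ and all $f$ comes from estimate (\ref{mod1}) (Lebesgue points of $u$ are Lebesgue points of all entropy fluxes, uniformly in $k$) together with Lemma~\ref{lem1}, which makes every such $z_0$ a common Lebesgue point of the parameter integrals for every $L^1$ weight. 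You instead confront the degenerate map $A$ directly: you integrate out $\ker A$ (legitimate, since the flux of $u$ is constant along $\ker A$ and $f$ has compact support), and slice only in $V^\perp=\ker A^*$, exploiting that the entropy flux of $v$ takes values in $V=A(\R^n)$, so the derivative of the concentration factor is annihilated; simultaneity in $(k,f)$ is restored by countable dense families plus continuity (continuity in $k$ requires exactly an estimate of type (\ref{mod1})). Both mechanisms are correct: the paper's trick buys freedom from Jacobians, adjoints and subspace decompositions, while yours stays in the original dimensions and makes explicit the geometric reason the transversal slicing costs nothing.

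There is, however, one oversight you must repair. Your concentration factor $\chi_\varepsilon(P_{V^\perp}y-z)$ localizes only the $V^\perp$-component of $z$, so after $\varepsilon\to0$ and the substitution $\eta=A\xi$ you recover the entropy inequalities for $v(t,P_{V^\perp}z+y(x))$; as written, your conclusion is the theorem for a.e. $z\in V^\perp$ --- a null subset of $\R^m$ whenever $V\neq\{0\}$ --- and not for a.e. $z\in\R^m$. The fix is one line, in either of two ways: (i) build the $V$-shift into the profile, taking $\Phi(t,\eta)=F(t,(A|_U)^{-1}(\eta-P_Vz))$; since the class of admissible profiles is translation-invariant, your countable dense family and the resulting full-measure set of $\zeta\in V^\perp$ serve all $V$-shifts at once; or (ii) observe that $P_Vz=Ax_0$ for some $x_0\in U$, so that $v(t,z+y(x))=v(t,P_{V^\perp}z+y(x_0+x))$ is a spatial translate of a function already shown to be an e.s., and translates of e.s. are e.s. of the correspondingly translated problem. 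Either way, the good set in $\R^m$ becomes the sum of the good set in $V^\perp$ and all of $V$, which has full measure by Fubini's theorem, and the proof is complete.
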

\begin{proof}
Firstly notice that the function $v(t,y)$ is an e.s. of the Cauchy problem for equation (\ref{1r}) considered
in the half-space $t>0$, $(y,x)\in\R^{m+n}$, with initial data $v_0(y)$ (we just attach the additional variables $x$). We make the non-degenerate linear change of variables $(z,x)\to (y,x)$, with $y=z+y(x)$. After this change the function $u(t,z,x)=v(t,z+y(x))$ satisfies for each $k\in\R$ the relation
\begin{eqnarray*}
|u-k|_t+\div_x[\sign(u-k)(\varphi(u)-\varphi(k))]=\\
|v-k|_t+\sum_{l=1}^n\sum_{j=1}^m [\sign(v-k)(\varphi_l(v)-\varphi_l(k))]_{y_j}\frac{\partial y_j(x)}{\partial x_l}=\\
|v-k|_t+\sum_{j=1}^m\sum_{l=1}^n [\sign(v-k)(\varphi_l(v)-\varphi_l(k))]_{y_j}\lambda_{jl}=\\
|v-k|_t+\sum_{j=1}^m [\sign(v-k)(\tilde\varphi_j(u)-\tilde\varphi_j(k))]_{y_j}\le 0 \ \mbox{ in } \D'(\R_+\times\R^{m+n}).
\end{eqnarray*}
It is clear that this function satisfies also the initial condition
$$
\lim_{t\to 0+} u(t,z,x)=u_0(z,x)\doteq v_0(z+y(x)) \ \mbox{ in } L^1_{loc}(\R^{m+n}).
$$
Thus, $u(t,z,x)$ is an e.s. of the modified problem (\ref{1}), (\ref{ini}) in the extended domain $\R_+\times\R^{m+n}$.
Define the set $E\subset\R^m$ consisting of $z\in\R^m$ such that $(t,z,x)$ is a Lebesgue point of $u(t,z,x)$ for almost all $(t,x)\in\Pi$. Then the set $E$ has full Lebesgue measure in $\R^m$. We demonstrate that for fixed $z_0\in E$ the function $u(t,z_0,x)$ is an e.s. of (\ref{1}), (\ref{ini}) with initial function $u_0(z_0,x)$. For that, we have to verify integral relation (\ref{enint}). Let us take a test function $f(t,x)\in C_0^1(\bar\Pi)$, $f(t,x)\ge 0$, and set
$f_\nu(t,z,x)=f(t,x)g_\nu(z-z_0)$, where the sequence $g_\nu(y)=\prod_{j=1}^m \delta_\nu(y_j)$, $\nu\in\N$, is an approximate unit in $\R^m$ (~so that it weakly converges as $\nu\to\infty$ to the Dirac $\delta$-function in $\D'(\R^m)$~), the functions $\delta_\nu(s)$ were defined in the proof of Proposition~\ref{pro1} above. Obviously, $f=f_\nu(t,z,x)\in C_0^1([0,+\infty)\times\R^{m+n})$, $f_\nu(t,z,x)\ge 0$. Taking $f=f_\nu(t,z,x)$ in relation (\ref{enint}) for the e.s. $u(t,z,x)$, we obtain that for each $k\in\R$
\begin{eqnarray}\label{7}
\int_{\R^m}\left(\int_{\R^n}|u_0(z,x)-k|f(0,x)dx\right)g_\nu(z-z_0)dz+ \nonumber\\
\int_{\R^m}\left(\int_{\Pi} [|u(t,z,x)-k|f_t(t,x) + \sign(u(t,z,x)-k)\times\right. \nonumber\\ \left.\vphantom{\int_{\R^m}}(\varphi(u(t,z,x))-\varphi(k))\cdot\nabla_x f(t,x)]dtdx\right)g_\nu(z-z_0)dz\ge 0.
\end{eqnarray}
Let $M=\|u\|_\infty$,
$$
\omega(\sigma)=\sup_{u,v\in [-M,M], |u-v|<\sigma} |\varphi(u)-\varphi(v)|
$$
be the continuity modulus of the vector $\varphi(u)$ on the segment $[-M,M]$. Then, as is easy to verify,
for all $u,v\in [-M,M]$, $k\in\R$
\begin{eqnarray}\label{mod1}
|\sign(u-k)(\varphi(u)-\varphi(k))-\sign(v-k)(\varphi(v)-\varphi(k))|\le\nonumber\\ 2\omega(|u-v|)\le
2\frac{\omega(\varepsilon)}{\varepsilon}|u-v|+2\omega(\varepsilon) \quad \forall\varepsilon>0.
\end{eqnarray}
Indeed, since the function $\omega(\sigma)$ is not decreasing and sub-additive, then for every $r\ge 0$
$$
\omega(r)\le\omega((m+1)\varepsilon)\le (m+1)\omega(\varepsilon)\le\frac{\omega(\varepsilon)}{\varepsilon}r+\omega(\varepsilon),
$$
where integer $m\ge 0$ satisfies the requirement $m\varepsilon\le r<(m+1)\varepsilon$.
Since ${\omega(\varepsilon)\to 0}$ as $\varepsilon\to 0$, it easily follows from (\ref{mod1}) that any Lebesgue point of
$u(t,z,x)$ is also a Lebesgue point of the vector-functions
$\sign(u(t,z,x)-k)(\varphi(u(t,z,x))-\varphi(k))$, as well as the functions $|u(t,z,x)-k|$.

Then, in view of Lemma~\ref{lem1} in Appendix, $z_0\in E$ is a Lebesgue point of the functions
\begin{eqnarray*}
I_0(z)\doteq\int_{\R^n}|u_0(z,x)-k|f(0,x)dx, \quad I(z)\doteq\int_{\Pi} [|u(t,z,x)-k|f_t(t,x)+ \\ \sign(u(t,z,x)-k)(\varphi(u(t,z,x))-\varphi(k))\cdot\nabla_x f(t,x)]dtdx
\end{eqnarray*}
(the function $I_0(z)$ is even continuous).
Therefore, it follows from (\ref{7}) in the limit as $\nu\to\infty$ that $I_0(z_0)+I(z_0)\ge 0$, i.e.,
\begin{eqnarray*}
\int_{\R^n}|u_0(z_0,x)-k|f(0,x)dx+\int_{\Pi} [|u(t,z_0,x)-k|f_t(t,x)+ \\ \sign(u(t,z_0,x)-k)(\varphi(u(t,z_0,x))-\varphi(k))\cdot\nabla_x f(t,x)]dtdx\ge 0
\end{eqnarray*}
for each $k\in\R$ and all nonnegative test functions $f(t,x)\in C_0^1(\bar\Pi)$. Hence, the function
$u=u(t,z_0,x)$ satisfies (\ref{enint}) with $u_0=u_0(z_0,x)$ and therefore it is an e.s. of (\ref{1}), (\ref{ini}) with initial function $u_0=v_0(z_0+y(x))$ for all $z_0$ from the set $E$ of full measure. The proof is complete.
\end{proof}

The additive group $\R^n$ acts on the torus $\T^m=\R^m/\Z^m$ by the shift transformations $S_xz=z+y(x)$. By linearity of $y(x)$, $S_{x_1+x_2}=S_{x_1}S_{x_2}$, so that $S_x$, $x\in\R^m$ is an $m$-parametric group of measure preserving transformations of $\T^m$. We demonstrate that this action is ergodic. Let $A\subset\T^m$ be a measurable invariant set, and $\displaystyle \chi_A(y)=\sum_{\bar k\in\Z^m}a_{\bar k} e^{2\pi i\bar k\cdot y}$
be the Fourier series for the indicator function $\chi_A(y)$ of the set $A$. Since this function is invariant, then
$\chi_A(z)=\chi_A(z+y(x))$ for all $x\in\R^n$, and
\begin{eqnarray}\label{8}
a_{\bar k}=\int_{\T^m}\chi_A(z)e^{-2\pi i\bar k\cdot z}dz=\int_{\T^m}\chi_A(z+y(x))e^{-2\pi i\bar k\cdot z}dz=\nonumber\\
\int_{\T^m}\chi_A(y)e^{-2\pi i\bar k\cdot (y-y(x))}dy=e^{2\pi i\bar k\cdot y(x)}\int_{\T^m}\chi_A(y)e^{-2\pi i\bar k\cdot y}dy=e^{2\pi i\bar k\cdot y(x)}a_{\bar k}.
\end{eqnarray}
Assume that $a_{\bar k}\not=0$. Then it follows from (\ref{8})
that $e^{2\pi i\bar k\cdot y(x)}=1$ for all $x\in\R^n$, which readily implies the relation $\displaystyle \sum_{j=1}^mk_j\lambda_j=0$, where $k_j$, $j=1,\ldots,m$, are the coordinates of $\bar k$. Since the vectors $\lambda_j$, $j=1,\ldots,m$, form a basis in $M_0$, we derive that all
$k_j=0$, that is, $\bar k=0$. Thus, only one Fourier coefficient $a_0$ may be different from zero. This yields
$\chi_A(y)\equiv c=\const$. It is clear that $c=0$ or $c=1$, which means that $m(A)=0$ or $m(A)=1$, where $m=dy$ is the Lebesgue measure on the torus $\T^m$. Thus, the action $S_xz=z+y(x)$ of the group $\R^n$ on the torus $\T^m$ is ergodic. By the variant of Birkhoff individual ergodic theorem \cite[Chapter VIII]{Danf} for each $w(y)\in L^1(\T^m)$ for almost all $z\in\T^m$ there exists the mean value
\begin{equation}\label{erg}
\dashint_{\R^n}w(z+y(x))dx=\int_{\T^m} w(y)dy.
\end{equation}
Moreover, if $w(y)\in C(\T^m)$, then (\ref{erg}) holds for all $z\in\T^m$ (this follows from uniform continuity of $w(z+y(x))$ with respect to the variables $z\in\R^m$) and $w^z(x)\doteq w(z+y(x))$ are Bohr almost periodic functions for all $z\in\T^m$.

\medskip
Now we are ready to prove our Theorems~\ref{th1},~\ref{th2} in the case when $u_0(x)$ has finite spectrum. Recall that $M_0$ is the minimal additive subgroup of $\R^n$ containing $Sp(u_0)$.

\begin{theorem}\label{th5}
Suppose that $u(t,x)$ is an e.s. of (\ref{1}), (\ref{ini}) with initial function $u_0(x)$ being a trigonometric polynomial.
Then $u(t,x)\in C([0,+\infty),\B^1(\R^n))\cap L^\infty(\Pi)$ and $Sp(u(t,\cdot))\subset M_0$ for all $t>0$. Moreover, if for all $\xi\in M_0$, $\xi\not=0$, the functions $u\to\xi\cdot\varphi(u)$ are not affine on non-empty intervals, then
the decay property (\ref{dec}) holds.
\end{theorem}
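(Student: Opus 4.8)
The plan is to transfer all three assertions from the periodic e.s. $v(t,y)$ on $\T^m$ to the given solution $u$, using the pullbacks $u^z(t,x)=v(t,z+y(x))$ supplied by Theorem~\ref{th3}. The one real difficulty is that the genuine initial datum $u_0(x)=v_0(y(x))$ corresponds to the pullback point $z=0$, which need not lie in the full-measure set $E$ on which $u^z$ is an e.s.; I will handle this by approximating $z=0$ by good points $z\in E$ and using the mean $L^1$-contraction to keep the comparison error uniform in $t$ and vanishing as $z\to0$.

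First I would record the structure of $u^z(t,\cdot)$ for $z\in E$. Choosing trigonometric polynomials $w_n\to v(t,\cdot)$ in $L^1(\T^m)$, each pullback $w_n(z+y(x))=\sum_{\bar k}c_{\bar k}e^{2\pi i\bar k\cdot z}e^{2\pi i\lambda(\bar k)\cdot x}$ is a trigonometric polynomial in $x$ with spectrum in $M_0$. Since identity (\ref{erg}) holds for all $z$ when the integrand is continuous, $N_1(w_n^z-w_l^z)=\|w_n-w_l\|_{L^1(\T^m)}$, so $\{w_n^z\}$ is Cauchy in $\B^1$; applying (\ref{erg}) to $|w_n-v(t,\cdot)|$ identifies its limit with $u^z(t,\cdot)$ for a.e.\ $z$. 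Hence $u^z(t,\cdot)\in\B^1(\R^n)$ and $Sp(u^z(t,\cdot))\subset M_0$. Applying (\ref{erg}) to $|v(t,\cdot)-v(s,\cdot)|$ likewise gives $N_1(u^z(t,\cdot)-u^z(s,\cdot))=\|v(t,\cdot)-v(s,\cdot)\|_{L^1(\T^m)}$ for a.e.\ $z$, so $t\mapsto u^z(t,\cdot)$ inherits the continuity of $v$ into $\B^1$.

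Next I would invoke Proposition~\ref{pro1}: since $u$ and $u^z$ are both e.s.\ of (\ref{1}), for all $t$
$$N_1(u(t,\cdot)-u^z(t,\cdot))\le N_1(u_0-v_0(z+y(\cdot)))\le\sup_{y}|v_0(y)-v_0(z+y)|=:\varepsilon(z),$$
and $\varepsilon(z)\to0$ as $z\to0$ by uniform continuity of the trigonometric polynomial $v_0$. Thus $\{u^z(t,\cdot)\}_{z\in E}$ is Cauchy in $\B^1$ as $z\to0$, uniformly in $t$; its limit $\bar u(t,\cdot)\in\B^1$ obeys $N_1(u(t,\cdot)-\bar u(t,\cdot))=0$, so $u(t,\cdot)=\bar u(t,\cdot)$ as elements of $\B^1$. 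The uniformity in $t$ transfers continuity to $\bar u$, and $\bar u(0,\cdot)=u_0$ since $u^z(0,\cdot)\to u_0$ uniformly, giving $u\in C([0,+\infty),\B^1(\R^n))$; membership in $L^\infty(\Pi)$ is immediate from $\|v\|_\infty<\infty$. Because every Bohr--Fourier coefficient is $1$-Lipschitz in $N_1$ (as $|\dashint f\,dx|\le N_1(f)$), the inclusion $Sp(u^z(t,\cdot))\subset M_0$ survives the limit, so $Sp(u(t,\cdot))\subset M_0$.

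Finally, for the decay I would verify that the hypothesis is exactly condition (\ref{ND2}) for the flux $\tilde\varphi$ on $\T^m=\R^m/\Z^m$, whose dual lattice is $\Z^m$: for $\bar\xi\in\Z^m\setminus\{0\}$ one has $\bar\xi\cdot\tilde\varphi(v)=\lambda(\bar\xi)\cdot\varphi(v)$, and $\bar\xi\mapsto\lambda(\bar\xi)$ maps $\Z^m\setminus\{0\}$ bijectively onto $M_0\setminus\{0\}$, so non-affineness of all $\eta\cdot\varphi$, $\eta\in M_0\setminus\{0\}$, is equivalent to (\ref{ND2}). The periodic decay result (\ref{dec1}) of \cite{PaAIHP} then gives $\int_{\T^m}|v(t,y)-C|\,dy\to0$ with $C=\int_{\T^m}v_0\,dy=\dashint_{\R^n}u_0\,dx$. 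Applying (\ref{erg}) to $|v(t,\cdot)-C|$ yields $\dashint_{\R^n}|u^z(t,x)-C|\,dx=\int_{\T^m}|v(t,y)-C|\,dy$ for a.e.\ $z$, and combined with $\bigl|\dashint|u(t)-C|\,dx-\dashint|u^z(t)-C|\,dx\bigr|\le N_1(u(t,\cdot)-u^z(t,\cdot))\le\varepsilon(z)$, letting $z\to0$ gives $\dashint_{\R^n}|u(t,x)-C|\,dx=\int_{\T^m}|v(t,y)-C|\,dy\to0$, which is (\ref{dec}). The crux throughout is the $z\to0$ passage, forced by the mere continuity of the flux (whence e.s.\ are determined only modulo $L^\infty_0$) together with the possible exceptionality of the distinguished point $z=0$.
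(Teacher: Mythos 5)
Your overall architecture coincides with the paper's: pull back the periodic e.s. $v(t,y)$ through Theorem~\ref{th3}, compare $u$ with the pullbacks $u^z$ via Proposition~\ref{pro1} as $z\to 0$ (your $\varepsilon(z)$ bound via uniform continuity of $v_0$ is a fine substitute for the paper's Fourier-coefficient estimate), identify mean values through the ergodic identity (\ref{erg}), and finish with the periodic decay result of \cite{PaAIHP} after checking that the hypothesis is exactly (\ref{ND2}) for $\tilde\varphi$ under the isomorphism $\bar k\mapsto\lambda(\bar k)$ of $\Z^m$ onto $M_0$. All of that is correct and matches the paper.

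There is, however, a genuine gap in how you handle the almost-everywhere quantifiers, and it sits precisely at the step the paper works hardest on. The Birkhoff identity (\ref{erg}) applied to an $L^1(\T^m)$ integrand such as $|v(t,\cdot)-v(s,\cdot)|$ or $|v(t,\cdot)-C|$ holds only for $z$ outside a null set \emph{that depends on the integrand, hence on $t$ (or on the pair $(t,s)$)}; symmetrically, Proposition~\ref{pro1} gives $N_1(u(t,\cdot)-u^z(t,\cdot))\le\varepsilon(z)$ only for a.e. $t$, with the null set depending on $z$ (your ``for all $t$'' needs the convention of Remark~\ref{rem1}, and even then only for a continuous-in-time representative of $u^z$, which need not be $v(t,z+y(\cdot))$ itself). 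Consequently your inference ``so $t\mapsto u^z(t,\cdot)$ inherits the continuity of $v$ into $\B^1$'' for a \emph{fixed} $z$ is unjustified: it would require one $z$ good for uncountably many pairs $(t,s)$, and the null exceptional sets cannot be intersected. It is not a removable formality: for fixed $z$ the coset $\{z+y(x):x\in\R^n\}$ is in general a Lebesgue-null subset of $\T^m$, so the pullback of the $L^1$ function $v(t,\cdot)$ along it is not even independent of the representative, and no all-$t$ identity of the kind you assert can hold. The same defect undermines ``Cauchy in $\B^1$ as $z\to0$, uniformly in $t$'': membership $u^z(t,\cdot)\in\B^1(\R^n)$ is only known for a.e. $z$ \emph{depending on} $t$, so there is no $t$-independent family over which the limit can be taken. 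The missing idea is the paper's Fubini/measurability argument: one shows that the set $G$ of pairs $(t,z)$ for which the identities (\ref{10a}) hold for all Fej\'er indices $r$ is measurable with full-measure sections, applies Fubini twice to extract a single sequence $z_l\to 0$ \emph{and} a single full-measure set $F$ of times on which both (\ref{con}) and (\ref{10a}) hold simultaneously for every $l$, derives (\ref{cont}) for $t,t'\in F$, and only then obtains uniform continuity on $F\cap[0,T]$ and extends to all of $[0,+\infty)$. Without this (or an equivalent device reconciling the two opposing a.e. quantifiers), the continuity statement $u\in C([0,+\infty),\B^1(\R^n))$ — and with it the passage from a.e. $t$ to all $t$ in the spectrum inclusion and the decay identity — does not follow from what you have written.
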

\begin{proof}
As above, we introduce the periodic function $v_0(y)$ on $\R^m$, where $m$ is the rank of $M_0$, and an e.s. $v(t,y)\in C([0,+\infty),L^1(\T^m))\cap L^\infty(\R_+\times\R^m)$ of the Cauchy problem for (\ref{1r}) with initial data $v_0(y)$.
By Theorem~\ref{th3} there exists a set $E_1\subset\R^m$ of full Lebesgue measure such that for all $z\in E_1$
the function $u^z(t,x)=v(t,z+y(x))$ is an e.s. of (\ref{1}), (\ref{ini}) with initial function $u^z_0(x)=v_0(z+y(x))$. Here the linear map $y(x)$ was defined before the formulation of Theorem~\ref{th3} above.
Let
$$
v_r(t,y)=\int_{\T^m} v(t,z)\Phi_r(y-z)dz
$$
be the Fej\'er approximations of $v(t,y)$, where
$$
\Phi_r(z)=\sum_{\bar k\in\Z^m, |\bar k|_\infty\le r} \prod_{j=1}^m\left(1-\frac{|k_j|}{r}\right) e^{2\pi i \bar k\cdot z}=
r^{-m}\prod_{j=1}^m \frac{\sin^2 \pi rz_j}{\sin^2 \pi z_j}
$$
are the Fej\'er kernels. Then $$v_r(t,y)=\sum_{\bar k\in\Z^m, |\bar k|_\infty\le r} a_{r\bar k}(t)e^{2\pi i\bar k\cdot y}$$ are trigonometric polynomials with respect to the variables $y$, and $v_r(t,\cdot)\to v(t,\cdot)$ as $r\to \infty$ in $L^1(\T^m)$ for all $t\ge 0$.
In view of (\ref{erg}) with $w(y)=|v(t,y)-v_r(t,y)|$ for all $t>0$ there exists a set $E(t)\subset\R^m$ of full Lebesgue measure consisting of $z\in\R^m$ such that the relation
\begin{eqnarray}\label{10a}
N_1(u^z(t,\cdot)-u_r^z(t,\cdot))=\dashint_{\R^n}|v(t,z+y(x))-v_r(t,z+y(x))|dx=\nonumber\\ \int_{\T^m}|v(t,y)-v_r(t,y)|dy
\end{eqnarray}
holds for all $r\in\N$, where $u_r=u_r^z(t,x)=v_r(t,z+y(x))$, $u=u^z(t,x)=v(t,z+y(x))$. It is clear that the set $G$ of pairs $(t,z)\in [0,+\infty)\times\R^m$, which satisfy for all $r\in\N$ the equality (\ref{10a}),
is measurable. We use here the fact that, due to the estimates
\begin{eqnarray*}
\left(\frac{N}{N+1}\right)^n N^{-n}\int_{C_N}w(x)dx\le R^{-n}\int_{C_R} w(x)dx\le \\
\left(\frac{N+1}{N}\right)^n (N+1)^{-n}\int_{C_{N+1}}w(x)dx, \quad \forall N\in\N, \ N\le R<N+1,
\end{eqnarray*}
which hold for every nonnegative function $w(x)\in L^1_{loc}(\R^n)$, existence of the mean value
$\displaystyle\dashint_{\R^n}w(x)dx$ is equivalent to existence of the limit $$\lim_{N\to\infty} N^{-n}\int_{C_N}w(x)dx=\dashint_{\R^n}w(x)dx.$$
Since all sections
$$G_t=\{ \ z\in\R^m \ | \ (t,z)\in G \ \}=E(t)$$ of the set $G$ have full measure, it has full Lebesgue measure as well, by Fubini's theorem. By Fubini's theorem again, there exists a set $E_2\subset\R^m$ of full measure such that the sections $$F(z)=G^z=\{ \ t\ge 0 \ | \ (t,z)\in G \ \}$$ are sets of full Lebesgue measure in $[0,+\infty)$ for each $z\in E_2$.

Now we choose a sequence $z_l\in E_1\cap E_2$ converging to zero as $l\to\infty$. By Proposition~\ref{pro1} for a.e. $t>0$
\begin{equation}\label{con}
I_1(u^{z_l}(t,x)-u(t,x))\le I_1(u^{z_l}_0(x)-u_0(x)).
\end{equation}
We denote by $F_{1l}\subset [0,+\infty)$ the set of $t$, which satisfy (\ref{con}). Let $F_{2l}=F(z_l)\subset [0,+\infty)$ be the sets of full measures, consisting of
$t\ge 0$ such that equality (\ref{10a}) holds with $z=z_l$ for all $r\in\N$. Observe that in view of the relation $\displaystyle v_r(t,\cdot)\mathop{\to}_{r\to\infty} v(t,y)$ in $L^1(\T^m)$, it follows from (\ref{10a}) that
$u^{z_l}(t,\cdot)\in\B^1(\R^n)$ and $Sp(u^{z_l}(t,\cdot))\subset M_0$ for $t\in F_{2l}$.
We set $\displaystyle F=\bigcap_{l\in\N} (F_{1l}\cap F_{2l})$. Then $F\subset [0,+\infty)$ is a set of full measure, and for each $t\in F$ \ $u^{z_l}(t,x)\in\B^1(\R^n)$ for all $l\in\N$. Observe that
$$
I_1(u^{z_l}_0(x)-u_0(x))\le\sum_{\bar k\in J}|a_{\bar k}||e^{2\pi i\bar k\cdot z_l}-1|\mathop{\to}_{l\to\infty} 0,
$$
where $J\subset\Z^m$ is a finite set (~the spectrum of $v_0(y)$~). Now it follows from (\ref{con}) that
$I_1(u^{z_l}(t,x)-u(t,x))\to 0$ as $l\to\infty$.
This implies that the limit function $u(t,\cdot)$
belongs to the Besicovitch space $\B^1(\R^n)$. Besides, since $Sp(u^{z_l}(t,\cdot))\subset M_0$ for all $l\in\N$,
then in the limit as $l\to\infty$ we obtain the required inclusion $Sp(u(t,\cdot))\subset M_0$.

Let us demonstrate continuity of the map $t\to u(t,\cdot)\in\B^1(\R^n)$. Suppose that $t,t'\in F$. Obviously, for each $r\in\N$
$$
\dashint_{R^n}|v_r(t',z+y(x))-v_r(t,z+y(x))|dx=\nonumber\\ \int_{\T^m}|v_r(t',y)-v_r(t,y)|dy,
$$
This relation in the limit as $r\to\infty$ implies, with the help of (\ref{10a}), that for all $l\in\N$
\begin{eqnarray}\label{cont}
N_1(|u^{z_l}(t',\cdot)-u^{z_l}(t,\cdot)|)=\dashint_{\R^n} |v(t',z_l+y(x))-v(t,z_l+y(x))|dx=\nonumber\\  \int_{\T^m}|v(t',y)-v(t,y)|dy.
\end{eqnarray}
Passing to the limit in (\ref{cont}) as $l\to\infty$ and taking into account (\ref{con}), we arrive at the relation
$$
N_1(|u(t',\cdot)-u(t,\cdot)|)=\int_{\T^m}|v(t',y)-v(t,y)|dy\mathop{\to}_{t'\to t} 0.
$$
This relation shows that the map $t\to u(t,\cdot)$ is uniformly continuous in $\B^1(\R^n)$ on ${F\cap [0,T]}$ for arbitrary $T>0$. Therefore, it can be uniquely extended as a continuous map $t\to u(t,\cdot)\in\B^1(\R^n)$ on the whole half-line $[0,+\infty)$. We see that, after possible correction of $u(t,\cdot)$ on a set of null measure, this map is continuous in $\B^1(\R^n)$. Notice also that $u(0,x)=u_0(x)$ (~this readily follows from the fact that $0\in F$~).
Since $u(t,x)$ is bounded, we conclude that
$u(t,x)\in C([0,+\infty),\B^1(\R^n))\cap L^\infty(\Pi)$. The first statement is proved.

\medskip
To prove the decay property, we notice that for every $\bar k=(k_1,\ldots,k_m)\in\Z^m$
$$
\bar k\cdot\tilde\varphi(u)=\sum_{j=1}^m\sum_{k=1}^n k_j\lambda_{jk}\varphi_k(u)=
\lambda(\bar k)\cdot\varphi(u),
$$
where $\displaystyle\lambda(\bar k)=\sum_{j=1}^m k_j\lambda_j\in M_0$. It now follows from our assumption (\ref{ND}) that the functions $u\to\bar k\cdot\tilde\varphi(u)$ are not affine on non-empty intervals. This means that non-degeneracy condition (\ref{ND2}) is satisfied (with $L'=L=\Z^m$).
As follows from the result of \cite{PaAIHP}, the periodic e.s. $v(t,y)$ satisfies the decay relation:
\begin{eqnarray}\label{11}
\lim_{t\to+\infty}\int_{\T^m}|v(t,y)-C|dy=0,  \\ \nonumber \mbox{ where }
C=\int_{\T^m} v_0(y)dy=\dashint_{\R^n}u_0(x)dx.
\end{eqnarray}
Now we choose a vanishing sequence $z_l\in E_1\cap E_2$ and the set $F\subset [0,+\infty)$ of full measure as in the first part of our proof.
By (\ref{erg}) with $w(y)=|v_r(t,y)-C|\in C(\T^m)$
$$
\dashint_{\R^n}|u_r^{z_l}(t,x)-C|dx=\dashint_{\R^n}|v_r(t,z_l+y(x))-C|dx=\int_{\T^m}|v_r(t,y)-C|dy.
$$
Passing in this equality to the limit, first as $r\to\infty$ and then as $l\to\infty$, and taking into account relations (\ref{10a}), (\ref{con}), we obtain that for all $t\in F$
$$
\dashint_{\R^n}|u(t,x)-C|dx=\int_{\T^m}|v(t,y)-C|dy.
$$
Since both parts of this equality are continuous with respect to $t\in [0,+\infty)$, it remains valid for all $t>0$.
In view of (\ref{11}), this implies the desired decay property (\ref{dec}):
$$
\lim_{t\to+\infty}\dashint_{\R^n}|u(t,x)-C|dx=0.
$$
The proof is complete.
\end{proof}

\section{The general case. Proof of Theorems~\ref{th1},~\ref{th2}}

Assume that ${u_0\in\B^1(\R^n)\cap L^\infty(\R^n)}$ is an arbitrary bounded Besicovitch almost periodic function, and
$u(t,x)\in L^\infty(\Pi)$ is an e.s. of (\ref{1}), (\ref{ini}).

Denote by $M(u_0)$ the minimal additive subgroup of $\R^n$ containing $Sp(u_0)$. Let $u_{0l}$ be the sequence of  trigonometric polynomials such that $u_{0l}\to u_0$ as $l\to\infty$ in $\B^1(\R^n)$, and $Sp(u_{0l})\subset M(u_0)$
(~for instance we may choose the Bochner-Fej\'er trigonometric polynomials for $u_0$, see \cite{Bes}~). We denote by $u_l(t,x)$ an e.s. of problem (\ref{1}), (\ref{ini}) with initial function $u_{0l}(x)$. By Proposition~\ref{pro1}, there exists a set $F\subset\R_+$ of full Lebesgue measure such that for all $t\in F$ for every $l\in\N$
\begin{equation}\label{13}
N_1(u_l(t,\cdot)-u(t,\cdot))\le N_1(u_{0l}-u_0)\mathop{\to}_{l\to\infty} 0.
\end{equation}
Since the function $u_{0l}(x)$ has finite spectrum, then by Theorem~\ref{th5} we see that $u_l(t,x)\in C([0,+\infty),\B^1(\R^n))$ and $Sp(u_l(t,\cdot))\subset M(u_0)$ for all $l\in\N$. In view of uniform limit relation (\ref{13}) we conclude, in the limit as $l\to\infty$, that $u(t,\cdot)$ is uniformly continuous map on $F\cap [0,T]$ into $\B^1(\R^n)$ for every $T>0$, and $Sp(u(t,\cdot)\subset M(u_0)$ for all $t\in F$. This allows to extend $u(t,x)$ as a function $C([0,+\infty),\B^1(\R^n))$ on the whole half-line $[0,+\infty)$. By continuity, we conclude that $Sp(u(t,\cdot))\subset M(u_0)$ for $t>0$. The proof of Theorem~\ref{th1} is complete.

\medskip
Denote by $M_l$ the minimal additive subgroup of $\R^n$ contained $Sp(u_{0l})$. Then $M_l\subset M(u_0)$.
Assume that the linear non-degeneracy condition is satisfied. Then for each $l\in\N$ and for all $\xi\in M_l$, $\xi\not=0$,
the function $u\to\xi\cdot\varphi(u)$ is not affine on non-empty intervals. By Theorem~\ref{th5} again
\begin{equation}\label{14}
\lim_{t\to+\infty}\dashint_{\R^n}|u_l(t,x)-C_l|dx=0,
\end{equation}

where
\begin{equation}\label{15}
C_l=\dashint_{\R^n} u_{0l}(x)dx\mathop{\to}_{l\to\infty} C=\dashint_{\R^n} u_0(x)dx.
\end{equation}
In view of (\ref{13}), (\ref{15}), it readily follows from (\ref{14}) in the limit as $l\to\infty$ that
$$
\lim_{t\to+\infty}\dashint_{\R^n}|u(t,x)-C|dx=0,
$$
and the decay property holds.

\medskip
Conversely, assume that non-degeneracy condition (\ref{ND}) fails. Then there exists a nonzero vector $\xi\in M_0$ such that the function
$\xi\cdot\varphi(u)=\tau u+c$ on some segment $[a,b]$, where $\tau,c\in\R$. Then, as is easy to verify, the function
$$
u(t,x)=\frac{a+b}{2}+\frac{b-a}{2}\sin(2\pi(\xi\cdot x-\tau t))
$$
is an e.s. of (\ref{1}), (\ref{ini}) with the periodic initial data $$u(0,x)=\frac{a+b}{2}+\frac{b-a}{2}\sin(2\pi(\xi\cdot x))$$ such that $Sp(u_0)=\{-\xi,\xi\}\subset M_0$. Obviously, the e.s. $u(t,x)$ does not satisfy the decay property.

\begin{remark}
The results of this paper can be easily extended to the case of unbounded almost periodic solutions
$u(t,x)\in C([0,+\infty),\B^1(\R^n))$. Indeed, if $u_0(x)\in\B_1(\R^n)$ then there exists a sequence $u_l(x)\in\B^1(\R^n)\cap L^\infty(\R^n)$ (~for instance, the sequence of Bochner-Fej\'er approximations for $u_0$~) such that $u_l(x)\to u(x)$ as $l\to\infty$ and that $Sp(u_l)\subset M(u_0)$. By Proposition~\ref{pro1}
for all $t>0$, $k,l\in\N$
$$
N_1(u_k(t,\cdot)-u_l(t,\cdot))\le N_1(u_k-u_l)\mathop{\to}_{k,l\to\infty} 0.
$$
This implies that $u_l(t,x)$, $l\in\N$, is a Cauchy sequence in $C([0,+\infty),\B^1(\R^n))$. Since this space is complete, we claim that $u_l(t,x)$ converges as $l\to\infty$ to a function $u(t,x)$ in $C([0,+\infty),\B^1(\R^n))$.
Obviously, this limit function does not depend on the choice of a sequence $u_l(x)\in\B^1(\R^n)\cap L^\infty(\R^n)$.
It is natural to call the function $u(t,x)\in C([0,+\infty),\B^1(\R^n))$ a renormalized solution to problem (\ref{1}), (\ref{ini}), cf. the notion of renormalized solution $u(t,x)\in C([0,+\infty),L^1(\R^n))$ defined in
\cite{BCW}, see also further results in \cite{LyPa1,LyPa2}. It readily follows from the decay property for bounded e.s. $u_l$ in the limit as $u_l\to u$ that any renormalized solution also satisfies the decay property (\ref{dec}) under non-degeneracy condition (\ref{ND}).
\end{remark}

\section{Appendix: The technical lemma}

\begin{lemma}\label{lem1}
Suppose that $u(x,y)\in L^\infty(\R^n\times\R^m)$,
$$
E=\{ \ x\in\R^n \ | \ (x,y) \mbox{ is a Lebesgue point of } u(x,y) \mbox{ for a.e. } y\in\R^m \ \}.
$$
Then $E$ is a set of full measure and $x\in E$ is a common Lebesgue point of the functions $\displaystyle I(x)=\int_{\R^m} u(x,y)\rho(y)dy$, where $\rho(y)\in L^1(\R^m)$.
\end{lemma}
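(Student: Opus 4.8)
The plan is to prove the two assertions separately, and to arrange the second one so that the exceptional null set of $y$'s depends only on $x_0$ and $u$ (not on $\rho$), which is what makes a single $x_0\in E$ a \emph{common} Lebesgue point for every $\rho$. For the full-measure claim I would simply appeal to the Lebesgue differentiation theorem: the set $\L\subset\R^{n+m}$ of Lebesgue points of $u$ has full measure, so its complement $\L^c$ is null, and applying Fubini's theorem to $\mathbf 1_{\L^c}$ gives $\int_{\R^n}\big(\int_{\R^m}\mathbf 1_{\L^c}(x,y)\,dy\big)\,dx=0$. Hence for a.e. $x$ the inner integral vanishes, i.e. $(x,y)\in\L$ for a.e. $y$, which is exactly the condition $x\in E$; therefore $E$ is of full measure.

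For the second assertion I would fix $x_0\in E$ and $\rho\in L^1(\R^m)$. From $|I(x)-I(x_0)|\le\int_{\R^m}|u(x,y)-u(x_0,y)|\,|\rho(y)|\,dy$ and Tonelli's theorem it suffices to show that $G(r):=\int_{\R^m}|\rho(y)|\,p_r(y)\,dy\to0$ as $r\to0$, where $p_r(y)=|B_r|^{-1}\int_{B_r(x_0)}|u(x,y)-u(x_0,y)|\,dx$ is an average taken over the single slice at the fixed height $y$. The main obstacle lives exactly here: membership $x_0\in E$ controls only the \emph{joint} $(n+m)$-dimensional averages of $u$ near $(x_0,y)$, whereas $p_r(y)$ pins the second variable to the single value $y$; consequently $p_r(y)$ need not tend to $0$ pointwise and a direct dominated-convergence argument is unavailable.

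To bypass this I would pass to the $y$-averaged quantity $M_r(y):=|B_r|^{-1}\int_{B_r(y)}p_r(y')\,dy'$. Writing $\psi(y):=u(x_0,y)\in L^\infty(\R^m)$ and using the elementary inequality $\bigl|\,|u(x,y')-u(x_0,y)|-|u(x,y')-u(x_0,y')|\,\bigr|\le|\psi(y')-\psi(y)|$ to switch the reference value from the slice height $y'$ to the centre $y$, one gets $M_r(y)\le\tilde g_r(y)+|B_r|^{-1}\int_{B_r(y)}|\psi(y')-\psi(y)|\,dy'$, where $\tilde g_r(y)=|B_r|^{-2}\int_{B_r(x_0)\times B_r(y)}|u(x,y')-u(x_0,y)|\,dx\,dy'$ is the genuine joint average about $(x_0,y)$. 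For a.e. $y$ the first term tends to $0$ because $(x_0,y)$ is a joint Lebesgue point (as $x_0\in E$), and the second tends to $0$ because $y$ is a Lebesgue point of $\psi$; since $0\le M_r\le2\|u\|_\infty$, dominated convergence yields $\int_{\R^m}|\rho(y)|\,M_r(y)\,dy\to0$.

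Finally I would recover $G(r)$ from this averaged version. By Fubini, $\int_{\R^m}|\rho(y)|\,M_r(y)\,dy=\int_{\R^m}p_r(y')\,(|\rho|*\eta_r)(y')\,dy'$ with $\eta_r=|B_r|^{-1}\mathbf 1_{B_r(0)}$ an approximate identity, so that $|\rho|*\eta_r\to|\rho|$ in $L^1(\R^m)$. Since $\|p_r\|_\infty\le2\|u\|_\infty$, the estimate $\bigl|G(r)-\int_{\R^m}|\rho|\,M_r\,dy\bigr|\le2\|u\|_\infty\,\big\||\rho|*\eta_r-|\rho|\big\|_1$ forces $G(r)\to0$, which is precisely the Lebesgue-point property of $I$ at $x_0$. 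As the null set of exceptional $y$'s was fixed by $x_0$ and $u$ alone, the same $x_0$ works for every $\rho\in L^1(\R^m)$, giving the common Lebesgue point. The delicate point to watch throughout is the replacement of the slice average $p_r$ by $M_r$ and the return via the $L^1$-continuity of convolution, since this is the step that converts the unavailable pointwise control into the usable averaged control.
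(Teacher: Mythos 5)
Your proof is correct and, at its core, it is the same argument as the paper's: both proofs convert the uncontrollable slice averages (your $p_r$) into joint averages centered near $(x_0,y)$ by convolving with an approximate identity in the $y$-variable at the same scale as the $x$-average, then conclude via dominated convergence (using that $(x_0,y)$ is a joint Lebesgue point for a.e.\ $y$, with the exceptional set independent of $\rho$) together with the $L^1$-continuity of convolution applied to $\rho$. The only deviation is bookkeeping: you take absolute values before smoothing, which forces the reference switch from $u(x_0,y')$ to $u(x_0,y)$ and hence the extra requirement that a.e.\ $y$ be a Lebesgue point of the slice $\psi=u(x_0,\cdot)$ --- true, but deserving a word of justification (measurability of the slice follows because, for $x_0\in E$, it agrees a.e.\ with the pointwise limit of the joint averages) --- whereas the paper mollifies $\rho$ first and transfers the mollifier onto $u$, so its joint average is automatically centered at $u(x_0,y)$ and no such term appears.
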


\begin{proof}
Since the set of Lebesgue points of $u(x,y)$ has full Lebesgue measure, then by Fubini's theorem $E\subset\R^n$ is a set of full measure.

Let $w(s)$ be the indicator function of the segment $[-1/2,1/2]$. We define sequences (approximate unities) $w_\nu(s)=\nu w(\nu s)$,
$\displaystyle w_\nu^k(z)=\prod_{j=1}^kw_\nu(z_j)$, $z=(z_1,\ldots,z_k)\in\R^k$, $\nu\in\N$.
Remark that the sequence of averaged functions
$$
\rho_\nu(y)=\rho*w_\nu^m(y)=\int_{\R^m}\rho(z)w_\nu^m(y-z)dz\mathop{\to}_{\nu\to\infty} \rho(y) \ \mbox{ in } L^1(\R^m).
$$
This relation implies that
\begin{equation}\label{l1}
I_\nu(x)=\int_{\R^m} u(x,y)\rho_\nu(y)dy\mathop{\to}_{\nu\to\infty} I(x)
\end{equation}
uniformly on $\R^n$. Notice also that
\begin{equation}\label{l2}
I_\nu(x)=\int_{\R^m}u_\nu(x,y)\rho(y)dy,
\end{equation}
where
$$
u_\nu(x,y)=\int_{\R^m} u(x,z)w_\nu^m(z-y)dz.
$$
Therefore, for $x_0\in E$
$$
I_\nu(x)-I(x_0)=\int_{\R^m}\left(\int_{\R^m}(u(x,z)-u(x_0,y))w_\nu^m(z-y)dz\right)\rho(y)dy,
$$
which implies
\begin{eqnarray}\label{l3}
\int_{\R^n}|I_\nu(x)-I(x_0)|w_\nu^n(x-x_0)dx\le\nonumber\\ \int_{\R^m}\left(\int_{\R^n\times\R^m}|u(x,z)-u(x_0,y)|w_\nu^n(x-x_0)w_\nu^m(z-y)dxdz\right)\rho(y)dy.
\end{eqnarray}
Since $(x_0,y)$ is a Lebesgue point of $u(x,z)$ for a.e. $y\in\R^m$, then
$$
\int_{\R^n\times\R^m}|u(x,z)-u(x_0,y)|w_\nu^n(x-x_0)w_\nu^m(z-y)dxdz\mathop{\to}_{\nu\to\infty} 0
$$
for a.e. $y\in\R^m$, and by Lebesgue's dominated convergence theorem the right-hand side of (\ref{l3}) converges to zero as $\nu\to\infty$.
Thus,
\begin{equation}\label{l4}
\int_{\R^n}|I_\nu(x)-I(x_0)|w_\nu^n(x-x_0)dx\mathop{\to}_{\nu\to\infty} 0.
\end{equation}
In view of (\ref{l1}) we see that
$$
\int_{\R^n}|I_\nu(x)-I(x)|w_\nu^n(x-x_0)dx\mathop{\to}_{\nu\to\infty} 0,
$$
which together with (\ref{l4}) yields
$$
\int_{\R^n}|I(x)-I(x_0)|w_\nu^n(x-x_0)dx\mathop{\to}_{\nu\to\infty} 0
$$
and shows that $x_0$ is a Lebesgue point of $I(x)$.
\end{proof}

{\bf Acknowledgement.}
This research was carried out with the financial support of the Russian Foundation for Basic Research (grant no. 12-01-00230) and the Ministry of Education and Science of Russian Federation (in the framework of state task).

\end{document}